\newtheorem{theorem}{Theorem}[section]
\newtheorem{corollary}[theorem]{Corollary}
\newtheorem{proposition}[theorem]{Proposition}
\newtheorem{lemma}[theorem]{Lemma}
\newtheorem{definition}[theorem]{Definition}
\theoremstyle{remark}
\newtheorem{remark}[theorem]{Remark}
\newtheorem{example}[theorem]{Example}
\numberwithin{equation}{section}
\newcommand{\ta}{\theta}
\newcommand{\C}{\mathbb C}
\newcommand{\naturals}{\mathbb N}
\newcommand{\Z}{\mathbb Z}
\definecolor{dblackcolor}{rgb}{0.0,0.0,0.0}
\definecolor{dredcolor}{rgb}{0.9,0.3,0.4}
\definecolor{dbluecolor}{rgb}{0.01,0.02,0.7}
\definecolor{dgreencolor}{rgb}{0.2,0.5,0.0}
\definecolor{dgraycolor}{rgb}{0.30,0.3,0.30}
\definecolor{gr}{rgb}{0.10,0.5,0.20}
\author[Michael J.\ Schlosser]{Michael J.\ Schlosser$^{*}$}
\address{Fakult\"at f\"ur Mathematik, Universit\"at Wien,
Oskar-Morgenstern-Platz~1, A-1090 Vienna, Austria}
\email{michael.schlosser@univie.ac.at}
\thanks{$^{*}$ Partially supported by FWF Austrian Science Fund
grant P32305.}
\author[Meesue Yoo]{Meesue Yoo$^{**}$}
\address{Department of Mathematics, Chungbuk National University,
Cheongju 28644, South Korea}
\email{meesueyoo@chungbuk.ac.kr}
\thanks{$^{**}$ Partially supported by the National Research Foundation of
Korea (NRF) grant funded by the Korea government
No.\! 2020R1F1A1A01064138.}
\title{Elliptic solutions of dynamical Lucas sequences}
\dedicatory{Dedicated to Johann Cigler on the occasion of his $84$th birthday}
\subjclass[2010]{Primary 05A30;
Secondary 05E15, 11B39, 39A13, 39A23}
\keywords{Lucas sequences, theta functions, elliptic numbers,
non-commutative Fibonacci polynomials}
\begin{document}

\begin{abstract}
We study two types of dynamical extensions of Lucas sequences
and give elliptic solutions for them.
The first type concerns a level-dependent
(or discrete time-dependent) version involving commuting variables.
We show that a nice solution
for this system is given by elliptic numbers.
The second type involves a non-commutative version of
Lucas sequences which defines the non-commutative (or abstract)
Fibonacci polynomials introduced by Johann Cigler.
If the non-commuting variables are specialized to be elliptic-commuting
variables the abstract Fibonacci polynomials become
non-commutative elliptic Fibonacci polynomials.
Some properties we derive for these include their explicit expansion
in terms of normalized monomials and a non-commutative elliptic
Euler--Cassini identity.
\end{abstract}

\maketitle

\section{Introduction}

In a series of papers, Lucas~\cite{L1,L2,L3}
studied the generalized Fibonacci polynomials $\langle n\rangle$
which depend on two commuting variables $P,Q$ and are defined by
$\langle 0\rangle=0$,  $\langle 1\rangle=1$, and
\begin{equation}\label{eq:lucas}
\langle n\rangle=P\langle n-1\rangle+Q\langle n-2\rangle,  
\end{equation}
for $n\ge 2$. (The two initial conditions $\langle 0\rangle=0$,  $\langle 1\rangle=1$
can be altered of course but we shall stick to them here as specified.)
For example, we have
\begin{equation*}
\langle 2\rangle=P,\quad\langle 3\rangle=P^2+Q,\quad
\langle 4\rangle=P^3+2PQ,\quad \langle 5\rangle=P^4+3P^2Q+Q^2.
\end{equation*}
For $P=Q=1$, this sequence reduces to the Fibonacci numbers
$\langle n\rangle=F_n$.
For $P=2$, $Q=-1$ it reduces to the nonnegative integers $\langle n\rangle=n$.
For $P=q+q^{-1}$, $Q=-1$, it reduces to the quantum integers 
$\langle n\rangle=\langle n\rangle_q:=\frac{q^n-q^{-n}}{q-q^{-1}}$,
while for $P=1+q$, $Q=-q$, it reduces to the (standard) $q$-integers
$\langle n\rangle=[n]_q:=\frac{1-q^n}{1-q}$.
More generally, for $P=c(1+q)$ and $Q=-c^2q$, it reduces to $c^{n-1}[n]_q$,
unifying the last two cases.



A function is defined to be {\em elliptic} if it is meromorphic and
doubly periodic. It is well known (cf.\ e.g.\ \cite{W}) that elliptic functions
can be expressed in terms of quotients of products of theta functions.
Define for $z\neq 0$ the \emph{(modified Jacobi) theta function} with
\emph{nome} $p$ by
\begin{equation*}
\theta(z;p)=\prod_{j\ge 0}\big((1-p^jz)(1-p^{j+1}/z)\big),\qquad\quad
|p|<1.
\end{equation*}
For brevity, we write
\begin{equation*}
\theta(z_1,\dots,z_m;p)=\theta(z_1;p)\cdots\theta(z_m;p)
\end{equation*}
for products of these functions.
The modified Jacobi theta functions satisfy the \emph{inversion formula}
\begin{subequations}
\begin{equation}\label{eq:inv}
\theta(z;p)=-z\theta(1/z; p),
\end{equation}
the \emph{quasi-periodicity relation}
\begin{equation}\label{eq:qpp}
\theta(pz; p)=-\frac{1}{z}\theta(z;p),
\end{equation}
and the \emph{addition formula}
\begin{equation}\label{eq:addf}
\theta(uv,u/v,wz,w/z;p)-\theta(uz,u/z,wv,w/v;p)
=\frac wv\,\theta(vz,v/z,uw,u/w;p)
\end{equation}
\end{subequations}
(cf.\ \cite[p.~451, Example 5]{WhW}).

In this paper, we study two types of dynamical extensions of
Lucas sequences and give \emph{elliptic} solutions for them.
The first type concerns a \emph{level-dependent}
(or \emph{discrete time-dependent}) version of \eqref{eq:lucas}
involving commuting variables. We show that a nice solution for
this system is given in terms of elliptic numbers. The second
type is a \emph{non-commutative} version which defines the
\emph{non-commutative} (or \emph{abstract})
\emph{Fibonacci polynomials} introduced by Johann
Cigler~\cite{C}. We study some (known and new) properties for these.
In particular, we extend the sequence of these polynomials
to negative indices and recover a formula by Cigler~\cite[Section~3]{C2}
for the negatively indexed
non-commutative Fibonacci polynomials in terms
of the non-negatively indexed ones. This allows us to
establish a non-commutative Euler--Cassini identity.
In the non-commutative setting we also take a closer
look at the case when the non-commuting variables
are specialized to satisfy \emph{weight-dependent
commutation relations}. In this case the non-commutative
Fibonacci polynomials become, what we shall call,
\emph{non-commutative weight-dependent Fibonacci polynomials}.
We show that after normal ordering of the weight-dependent-commuting
variables weight-dependent binomial coefficients appear
in the expansion of the normalized monomials.
A further specialization of interest concerns the introduction
of elliptic weights. For \emph{elliptic-commuting variables}
the non-commutative Fibonacci polynomials become, what we shall call,
\emph{non-commutative elliptic Fibonacci polynomials}.
In this case after normal ordering of the elliptic-commuting
variables fully factorized elliptic binomial coefficients appear
in the expansion of the normalized monomials.
This extends the basic case (or $q$-case) for $q$-commuting variables.
We also establish an explicit Euler--Cassini identity for the
non-commutative elliptic Fibonacci polynomials.

We would like to point out that the results in the current paper
do not appear to directly contain the \emph{elliptic Fibonacci numbers}
which were introduced in \cite{SY-com} nor those (of a simpler type)
which were introduced in \cite{BCK}. While we believe that there
is a connection of our non-commutative elliptic Fibonacci polynomials
considered in Section~\ref{sec:nc} of this paper with our earlier
elliptic Fibonacci numbers in \cite{SY-com}, the connection is
not yet entirely clear and requires further investigations.

Our paper is organized as follows.
In Section~\ref{sec:dyn} we study the level-dependent Lucas system
with commutative variables and give an elliptic solution for it.
In Section~\ref{sec:pre} we describe the algebras of
weight-dependent-commuting and elliptic-commuting variables we are
working with in the final section, and also define
corresponding weighted and elliptic binomial coefficients.
Finally, Section~\ref{sec:nc} is devoted to the non-commutative
Lucas equation and the noncommutative weight-dependent and
elliptic Fibonacci polynomials.

\section{Elliptic solution of a level-dependent Lucas system}\label{sec:dyn}

In this section, 
we consider the following level-dependent extension of Lucas'
generalized Fibonacci polynomials $\langle n\rangle$
defined by the recurrence relation \eqref{eq:lucas}. 
We consider sequences of variables $(P_\ell)_{\ell\ge 0}$
and $(Q_\ell)_{\ell\ge 0}$ (where the index $\ell$ could be
thought of being the \emph{level} or \emph{discrete time}).
Now define the doubly-indexed sequence
$\big(\langle n\rangle_\ell\big)_{n,\ell\ge 0}$
by $\langle 0\rangle_\ell=0$, $\langle 1\rangle_\ell=1$,
for all $\ell\ge 0$ and, instead of \eqref{eq:lucas},
assume the following dynamical recurrence relation:
\begin{equation}\label{eq:l-lucas}
\langle n\rangle_\ell=P_\ell\langle n-1\rangle_{\ell+1}+
Q_\ell\langle n-2\rangle_{\ell+2}, 
\end{equation}
for $n\ge 2$ and all $\ell\ge 0$. Here we have
\begin{align*}
\langle 2\rangle_\ell&=P_\ell\\
\langle 3\rangle_\ell&=P_\ell P_{\ell+1}+Q_\ell,\\
\langle 4\rangle_\ell&=P_\ell P_{\ell+1}P_{\ell+2}+P_\ell Q_{\ell+1}+
P_{\ell+2}Q_\ell,\\
\langle 5\rangle_\ell&=P_\ell P_{\ell+1}P_{\ell+2}P_{\ell+3}+
P_\ell P_{\ell+1}Q_{\ell+2}+P_\ell P_{\ell+3}Q_{\ell+1}+
P_{\ell+2}P_{\ell+3}Q_\ell+Q_\ell Q_{\ell+2},
\end{align*}
for all $l\ge 0$.

We now show that the system in \eqref{eq:l-lucas} admits
a nice solution involving elliptic functions.

Let $a$ and $b$ be two independent variables, and $q\in\C$ be the \emph{base}.
It readily follows by the addition formula \eqref{eq:addf} that for
\begin{equation}\label{eq:xlyl}
P_\ell=\frac{\ta(q^2,aq^{\ell+2},bq^{2\ell+2},aq^{-\ell}/b;p)}
{\ta(q,aq^{\ell+1},bq^{2\ell+3},aq^{1-\ell}/b;p)},
\quad\; Q_\ell=-\frac{\ta(aq^{\ell+3},bq^{2\ell+1},aq^{-1-\ell}/b;p)}
{\ta(aq^{\ell+1},bq^{2\ell+3},aq^{1-\ell}/b;p)}q
\end{equation}
the sequence defined by the system in \eqref{eq:l-lucas} reduces to
the \emph{elliptic integers} 
\begin{equation}\label{eq:ellint}
\langle n\rangle_\ell=\langle n\rangle_{aq^\ell,bq^{2\ell};q,p}:=
\frac{\theta(q^n, a q^{\ell+n}, b q^{2\ell+n}, a q^{2-\ell-n}/b;p)}
{\theta(q, aq^{\ell+1}, b q^{2\ell+2n-1}, aq^{1-\ell}/b;p)}.
\end{equation}
Indeed, if we insert $P_\ell$ and $Q_\ell$ from \eqref{eq:xlyl}
in \eqref{eq:l-lucas}, we obtain
\begin{align*}
&P_\ell\langle n-1\rangle_{\ell+1}+
Q_\ell\langle n-2\rangle_{\ell+2}\\
&=
\frac{\ta(q^2,aq^{\ell+2},bq^{2\ell+2},aq^{-\ell}/b;p)}
{\ta(q,aq^{\ell+1},bq^{2\ell+3},aq^{1-\ell}/b;p)}\,
\frac{\theta(q^{n-1}, a q^{\ell+n}, b q^{2\ell+n+1}, a q^{2-\ell-n}/b;p)}
{\theta(q, aq^{\ell+2}, b q^{2\ell+2n-1}, aq^{-\ell}/b;p)}\\
&\quad\,-\frac{\ta(aq^{\ell+3},bq^{2\ell+1},aq^{-1-\ell}/b;p)}
{\ta(aq^{\ell+1},bq^{2\ell+3},aq^{1-\ell}/b;p)}q\,
\frac{\theta(q^{n-2}, a q^{\ell+n}, b q^{2\ell+n+2}, a q^{2-\ell-n}/b;p)}
{\theta(q, aq^{\ell+3}, b q^{2\ell+2n-1}, aq^{-1-\ell}/b;p)}\\
&=\frac{\ta(q^2,bq^{2\ell+2},q^{n-1}, a q^{\ell+n}, b q^{2\ell+n+1},
a q^{2-\ell-n}/b;p)}
{\ta(q,aq^{\ell+1},bq^{2\ell+3},aq^{1-\ell}/b,q, b q^{2\ell+2n-1};p)}\\
&\quad\,-q\frac{\ta(bq^{2\ell+1},q^{n-2}, a q^{\ell+n}, b q^{2\ell+n+2},
a q^{2-\ell-n}/b;p)}
{\ta(aq^{\ell+1},bq^{2\ell+3},aq^{1-\ell}/b,q,
b q^{2\ell+2n-1};p)}\\
&=
\frac{\ta(a q^{\ell+n},a q^{2-\ell-n}/b;p)}
{\ta(q,aq^{\ell+1},bq^{2\ell+3},aq^{1-\ell}/b,q, b q^{2\ell+2n-1};p)}\\
&\quad\,\times
\left[\ta(q^2,bq^{2\ell+2},q^{n-1}, b q^{2\ell+n+1};p)-
q\,\ta(q,bq^{2\ell+1},q^{n-2},b q^{2\ell+n+2};p)\right]\\
&=\frac{\ta(a q^{\ell+n},a q^{2-\ell-n}/b;p)}
{\ta(q,aq^{\ell+1},bq^{2\ell+3},aq^{1-\ell}/b,q, b q^{2\ell+2n-1};p)}
\ta(q^n,bq^{2\ell+n},q,bq^{2\ell+3};p)\\
&=\frac{\ta(q^n,a q^{\ell+n},b q^{2\ell+n},a q^{2-\ell-n}/b;p)}
{\ta(q,aq^{\ell+1},b q^{2\ell+2n-1},aq^{1-\ell}/b;p)}=\langle n\rangle_\ell,
\end{align*}
where the difference of the two products of theta functions in the pair
of brackets in the fourth equality was simplified with respect to the
\begin{equation*}
(u,v,w,z)\mapsto\big(b^{\frac 12}q^{\ell+n},b^{\frac 12}q^{\ell+1},
b^{\frac 12}q^{\ell+2},b^{\frac 12}q^\ell\big)
\end{equation*}
case of \eqref{eq:addf}. This proves the claim about the elliptic solution.

The elliptic integers in \eqref{eq:ellint} can actually be identified as
specialized elliptic binomial coefficients
$$
\langle n\rangle_\ell=\begin{smallmatrix}\begin{bmatrix}n\\
n-1\end{bmatrix}\end{smallmatrix}_{aq^\ell,bq^{2\ell};q,p},
$$
the general case of the elliptic binomial coefficients being defined in \eqref{ellbin}.

Finally, we point out a simple way to obtain
a new dynamical Lucas sequence from a given one
by a suitable ``scaling" of the variables with respect to an additional
sequence $(c_\ell)_{\ell\ge 0}$.
In particular, given three sequences
$$
\big(\langle n\rangle_\ell\big)_{n,\ell\ge 0},\qquad (P_\ell)_{\ell\ge 0},
\qquad (Q_\ell)_{\ell\ge 0},
$$
satisfying \eqref{eq:l-lucas} with the initial conditions
$\langle 0\rangle_\ell=0$ and $\langle 1\rangle_\ell=1$,
for all $\ell\ge 0$,  the three sequences
$$
\Big(\widetilde{\langle n\rangle}_\ell\Big)_{n,\ell\ge 0},\qquad
\big(\widetilde{P}_\ell\big)_{\ell\ge 0},
\qquad \big(\widetilde{Q}_\ell\big)_{\ell\ge 0},
$$
with the initial conditions
$\widetilde{\langle 0\rangle}_\ell=0$ and $\widetilde{\langle 1\rangle}_\ell=1$,
for all $\ell\ge 0$, also satisfy \eqref{eq:l-lucas}, where
$$
\widetilde{\langle n\rangle}_\ell=
c_{\ell}c_{\ell+1}\cdots c_{\ell+n-2} \langle n\rangle_\ell,
\qquad\widetilde{P}_\ell=c_\ell P_\ell,
\qquad\widetilde{Q}_\ell=c_\ell c_{\ell+1} Q_\ell,
$$
for all $n\ge 2$ and $\ell\ge 0$. It is straightforward to confirm this assertion
by multiplying both sides of \eqref{eq:l-lucas} with the
product $c_{\ell}c_{\ell+1}\cdots c_{\ell+n-2}$.

\section{Weight-dependent commutation relations and elliptic weights}\label{sec:pre}
\subsection{Noncommutative weight-dependent binomial theorem}
The material in this subsection, up to Lemma~\ref{lem:com},
is taken from the first author's paper \cite{Schl1}, while the material
afterwards is new.

Let $\mathbb{N}$ and $\mathbb{N}_0$ denote the sets of positive and
nonnegative integers, respectively. 

\begin{definition}\label{def:Cwxy}
For a doubly-indexed sequence of indeterminates $(w(s,t))_{s,t\in\mathbb{N}}$,
let $\mathbb{C}_w [x,y]$ be the associative unital algebra over $\mathbb{C}$
generated by $x$ and $y$, satisfying the following three relations:
\begin{subequations}\label{eqn:noncommrel}
\begin{align}
yx &= w(1,1)xy,\\
x w(s,t)&= w(s+1,t)x,\\
y w(s,t)&= w(s,t+1)y,
\end{align}
\end{subequations}
for all $s,t\in\mathbb N$. 
\end{definition}

For $s\in\mathbb{N}$ and $t\in \mathbb{N}_0$, we define 
\begin{equation}\label{eq:W}
W(s,t):= \prod_{j=1}^t w(s,j),
\end{equation}
the empty product being defined to be $1$.
Note that for $s,t\in\mathbb{N}$, we have $w(s,t)=W(s,t)/W(s,t-1)$.
We refer to the $w(s,t)$ as {\em small weights}, whereas to the
$W(s,t)$ as {\em big weights} (or {\em column weights}).

Let the \emph{weight-dependent binomial coefficients} be defined by 
\begin{subequations}\label{wbineq}
\begin{align}\label{recu}
&{}_{\stackrel{\phantom w}{\stackrel{\phantom w}w}}\!\!
\begin{bmatrix}0\\0\end{bmatrix}=1,\qquad
{}_{\stackrel{\phantom w}{\stackrel{\phantom w}w}}\!\!
\begin{bmatrix}n\\k\end{bmatrix}=0
\qquad\text{for\/ $n\in\naturals_0$,\, and\/
$k\in-\naturals$ or $k>n$},\\
\intertext{and}
\label{recw}
&{}_{\stackrel{\phantom w}{\stackrel{\phantom w}w}}\!\!
\begin{bmatrix}n+1\\k\end{bmatrix}=
{}_{\stackrel{\phantom w}{\stackrel{\phantom w}w}}\!\!
\begin{bmatrix}n\\k\end{bmatrix}
+{}_{\stackrel{\phantom w}{\stackrel{\phantom w}w}}\!\!
\begin{bmatrix}n\\k-1\end{bmatrix}
\,W(k,n+1-k)
\qquad \text{for $n,k\in\naturals_0$}.
\end{align}
\end{subequations}

These weight-dependent binomial coefficients have a
combinatorial interpretation in terms of \emph{weighted lattice paths},
see \cite{Schl0}. Here, a lattice path is a sequence of north (or vertical)
and east (or horizontal) steps in the first quadrant of the $xy$-plane,
starting at the origin $(0,0)$ and ending at say $(n,m)$.
We give weights to such paths by assigning the big weight $W(s,t)$
to each east step $(s-1,t)\rightarrow (s,t)$ and $1$ to each north step.
Then define the weight of a path $P$, $w(P)$, to be the product of the
weight of all its steps. 

Given two points $A,B\in\naturals_0^2$,
let $\mathcal{P}(A\rightarrow B)$ be the set of all lattice paths
from $A$ to $B$, and define 
\begin{displaymath}
w(\mathcal{P}(A\rightarrow B)):= \sum_{P\in \mathcal{P}(A\rightarrow B)}w(P).
\end{displaymath}
Then we have 
\begin{equation}
w(\mathcal{P}((0,0)\rightarrow (k,n-k)))=
{}_{\stackrel{\phantom w}{\stackrel{\phantom w}w}}\!\!
\begin{bmatrix}n\\k\end{bmatrix}\label{eqn:wbin}
\end{equation}
as both sides of the equation satisfy the same recursion
and initial condition as in \eqref{wbineq}.

Interpreting the $x$-variable as an east step and the $y$-variable
as a north step, we get the following weight dependent binomial theorem.

\begin{theorem}[\cite{Schl1}]\label{thm:binomial}
Let $n\in\mathbb{N}_0$. Then, as an identity in $\mathbb{C}_w[x,y]$,
\begin{equation}\label{eqn:binomial}
(x+y)^n =\sum_{k=0}^n
{}_{\stackrel{\phantom w}{\stackrel{\phantom w}w}}\!\!
\begin{bmatrix}n\\k\end{bmatrix}x^k y^{n-k}.
\end{equation}
\end{theorem}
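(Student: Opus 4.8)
The plan is to prove the weight-dependent binomial theorem by induction on $n$, exploiting the commutation relations in Definition~\ref{def:Cwxy} to move all occurrences of $x$ to the left of the $y$'s, and then recognizing that the coefficients produced by this normal-ordering process satisfy precisely the recursion \eqref{wbineq} defining the weight-dependent binomial coefficients. The base case $n=0$ is immediate since $(x+y)^0=1$ and the only nonzero term on the right is ${}_w\!\begin{bmatrix}0\\0\end{bmatrix}x^0y^0=1$.

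For the inductive step, I would assume \eqref{eqn:binomial} holds for some $n\in\mathbb{N}_0$ and compute $(x+y)^{n+1}=(x+y)(x+y)^n$ by left-multiplying the inductive hypothesis:
\begin{equation*}
(x+y)^{n+1}=\sum_{k=0}^n {}_w\!\begin{bmatrix}n\\k\end{bmatrix}\,x\cdot x^k y^{n-k}
+\sum_{k=0}^n {}_w\!\begin{bmatrix}n\\k\end{bmatrix}\,y\cdot x^k y^{n-k}.
\end{equation*}
The first sum is already normal-ordered, contributing $\sum_k {}_w\!\begin{bmatrix}n\\k\end{bmatrix}x^{k+1}y^{n-k}$. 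The crux is the second sum: I must commute the leading $y$ past the block $x^k$. The hard part is keeping careful track of which weight factors $w(s,t)$ are accumulated. Using relation $yx=w(1,1)xy$ together with the weight-shifting relations $x\,w(s,t)=w(s+1,t)x$ and $y\,w(s,t)=w(s,t+1)y$, one moves $y$ rightward through $x^k$ one step at a time; each pass across an $x$ introduces a weight whose indices are shifted according to how many $x$'s and $y$'s have already been crossed. The bookkeeping should collapse into exactly the big weight $W(k,n+1-k)=\prod_{j=1}^{n+1-k}w(k,j)$, so that $y\,x^k y^{n-k}=W(\,\cdot\,)\,x^k y^{n+1-k}$ for the appropriate column weight.

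Once this commutation is carried out, I would collect the coefficient of each monomial $x^k y^{n+1-k}$. The $x^{k}$-term receives a contribution ${}_w\!\begin{bmatrix}n\\k-1\end{bmatrix}$ from the first (shifted) sum and a contribution ${}_w\!\begin{bmatrix}n\\k\end{bmatrix}$ weighted by the big weight arising from normal-ordering in the second sum. Matching this against the defining recursion \eqref{recw}, namely ${}_w\!\begin{bmatrix}n+1\\k\end{bmatrix}={}_w\!\begin{bmatrix}n\\k\end{bmatrix}+{}_w\!\begin{bmatrix}n\\k-1\end{bmatrix}W(k,n+1-k)$, completes the induction, provided the roles of the two summands and the precise weight index are aligned correctly. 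The main obstacle I anticipate is precisely this alignment: verifying that the iterated weight shifts from pulling $y$ through $x^k$ produce the specific big weight $W(k,n+1-k)$ with the correct first index $k$ and correct product length $n+1-k$, and matching it to the correct binomial coefficient in the recursion (rather than an off-by-one shifted version). Alternatively, since the lattice-path interpretation \eqref{eqn:wbin} was already established, one could bypass the algebraic recursion entirely by arguing that expanding $(x+y)^n$ and normal-ordering corresponds bijectively to summing the path weights $w(P)$ over all monotone lattice paths, with each east/north choice in a word tracked as a step; but the inductive normal-ordering argument is cleaner to present and self-contained.
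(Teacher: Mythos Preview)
Your inductive plan has a genuine gap stemming from multiplying on the \emph{left}. In your displayed equation you have silently commuted the new factors $x$ and $y$ past the coefficients ${}_w\!\begin{smallmatrix}\begin{bmatrix}n\\k\end{bmatrix}\end{smallmatrix}$; but these coefficients are polynomials in the weights $w(s,t)$, and by the relations in \eqref{eqn:noncommrel} neither $x$ nor $y$ commutes with them. Left multiplication by $x$ shifts every $w(s,t)$ appearing in the coefficient to $w(s+1,t)$, and left multiplication by $y$ shifts it to $w(s,t+1)$; in particular your ``first sum'' is \emph{not} already normal-ordered. Moreover, the weight you hope to extract from $y\,x^ky^{n-k}$ is not the one that actually arises: moving a single $y$ rightward past $x^k$ produces $\prod_{i=1}^k w(i,1)$, not $W(k,n+1-k)=\prod_{j=1}^{n+1-k}w(k,j)$. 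So the ``alignment'' worry you flag is real, and with left multiplication it cannot be resolved using the defining recursion \eqref{recw} (one would instead need the second recursion of Proposition~\ref{prop:2ndrec}, together with the shifted coefficients, which is not yet available in the general weight-dependent setting).

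The fix is simple: multiply on the \emph{right}, writing $(x+y)^{n+1}=(x+y)^n(x+y)$. Then the $y$-term $\sum_k{}_w\!\begin{smallmatrix}\begin{bmatrix}n\\k\end{bmatrix}\end{smallmatrix}\,x^ky^{n-k}\cdot y$ really is already normal-ordered. For the $x$-term one moves the trailing $x$ leftward through $y^{n-k}$ via Lemma~\ref{lem:com}, giving $y^{n-k}x=W(1,n-k)\,xy^{n-k}$, and then slides the resulting weight left past $x^k$ to obtain $W(k+1,n-k)\,x^{k+1}y^{n-k}$; after re-indexing, the coefficient of $x^ky^{n+1-k}$ is exactly ${}_w\!\begin{smallmatrix}\begin{bmatrix}n\\k\end{bmatrix}\end{smallmatrix}+{}_w\!\begin{smallmatrix}\begin{bmatrix}n\\k-1\end{bmatrix}\end{smallmatrix}\,W(k,n+1-k)$, matching \eqref{recw}. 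This is precisely the mechanism the paper uses in the proof of Proposition~\ref{prop:binF}. The combinatorial bijection you mention as an alternative is in fact the argument the paper itself invokes for Theorem~\ref{thm:binomial}, in the sentence immediately preceding it.
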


The following rule for interchanging powers of $x$ and $y$
is easy to prove by induction (and it is also easy to interpret
combinatorially by considering weighted lattice paths);
we therefore omit the proof.
\begin{lemma}[\cite{Schl1}]\label{lem:com}
We have
$$
y^k x^\ell=\left(\prod_{i=1}^\ell\prod_{j=1}^k w(i,j)\right)x^\ell y^k
=\left(\prod_{i=1}^\ell W(i,k)\right)x^\ell y^k.
$$
\end{lemma}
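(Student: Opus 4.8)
The plan is to prove the commutation rule
$$
y^k x^\ell=\left(\prod_{i=1}^\ell\prod_{j=1}^k w(i,j)\right)x^\ell y^k
=\left(\prod_{i=1}^\ell W(i,k)\right)x^\ell y^k
$$
by a double induction, using the defining relations \eqref{eqn:noncommrel} of $\mathbb{C}_w[x,y]$ as the only input. The base case is $k=\ell=0$ (or more conveniently $k=\ell=1$, which is just $yx=w(1,1)xy$, i.e.\ relation (3.3a)), and I would build up from there. The equivalence of the two products on the right is immediate from the definition \eqref{eq:W} of the big weight $W(i,k)=\prod_{j=1}^k w(i,j)$, so the real content is establishing the first equality.

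The key subtlety is that the weights $w(s,t)$ are themselves non-commutative objects in the algebra: the relations (3.3b) and (3.3c) say that pushing an $x$ past a weight increments its first index, while pushing a $y$ past increments the second. Thus when I move a block $y^k$ across a block $x^\ell$ I must track \emph{which} weights get produced and \emph{where} they end up. The cleanest route is to fix $k$ and induct on $\ell$. First I would handle the single-column case $y^k x$: starting from $yx=w(1,1)xy$, I repeatedly apply (3.3c) to move the generated weight leftward past the remaining $y$'s, picking up an index shift each time, to obtain $y^k x = \left(\prod_{j=1}^k w(1,j)\right) x\,y^k = W(1,k)\,x\,y^k$. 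This is the inner induction (on $k$) and is where the index bookkeeping happens; I expect this to be the main obstacle, since one has to verify that the second index of the weights sweeps through $1,2,\dots,k$ exactly once and that none of the weights pick up stray first-index shifts during the rearrangement.

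With the single-column identity $y^k x = W(1,k)\,x\,y^k$ in hand, the outer induction on $\ell$ becomes routine. Assuming $y^k x^\ell=\left(\prod_{i=1}^\ell W(i,k)\right)x^\ell y^k$, I would write $y^k x^{\ell+1}=(y^k x^\ell)\,x$, substitute the inductive hypothesis, and then move the trailing $x$ leftward past the accumulated weights $\prod_{i=1}^\ell W(i,k)$ using relation (3.3b), which increments each first index $i\mapsto i+1$, converting $\prod_{i=1}^\ell W(i,k)$ into $\prod_{i=2}^{\ell+1} W(i,k)$; combining this with the freshly produced factor $W(1,k)$ from applying the single-column case to the innermost $y^k x$ yields the full product $\prod_{i=1}^{\ell+1} W(i,k)$, as desired. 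The authors declare this ``easy to prove by induction'' and omit it; the proposal above is essentially the induction they have in mind, with the single-column step being the only part requiring genuine care.
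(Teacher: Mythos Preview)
Your approach is exactly what the paper has in mind: the authors omit the proof entirely, saying only that it is ``easy to prove by induction'' (with a combinatorial interpretation via weighted lattice paths as an alternative), and your double induction on $k$ and then on $\ell$ is precisely such an argument. The single-column step $y^k x = W(1,k)\,x\,y^k$ is handled correctly.

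There is, however, a small bookkeeping slip in your description of the outer induction. After writing $y^k x^{\ell+1}=(y^k x^\ell)\,x$ and substituting the hypothesis you have
\[
\Big(\prod_{i=1}^\ell W(i,k)\Big)\,x^\ell\,y^k\cdot x
=\Big(\prod_{i=1}^\ell W(i,k)\Big)\,x^\ell\cdot W(1,k)\,x\,y^k,
\]
and what now needs to move is the \emph{newly produced} factor $W(1,k)$ leftward past $x^\ell$; by relation (3.3b) this becomes $W(\ell+1,k)$, giving $\prod_{i=1}^{\ell+1}W(i,k)$. Your text instead says you shift the already-accumulated block $\prod_{i=1}^\ell W(i,k)$ to $\prod_{i=2}^{\ell+1}W(i,k)$, which corresponds to the alternative splitting $y^k x^{\ell+1}=(y^k x)\,x^\ell$ (apply the single-column case first, then the hypothesis, then push the lone $x$ right through the weights). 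Either ordering is fine, but as written you have mixed the two; once you pick one consistently the argument goes through without issue.
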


We now extend the algebra $\mathbb{C}_w [x,y]$ from
Definition~\ref{def:Cwxy} to the algebra $\mathbb{C}_w [x,x^{-1},y]$:
\begin{definition}\label{def:Cwxy2}
For a doubly-indexed sequence of invertible
indeterminates\break $(w(s,t))_{s\in\mathbb{Z},t\in\mathbb{N}}$,
let $\mathbb{C}_w [x,x^{-1},y]$ be the associative unital algebra
over $\mathbb{C}$ generated by $x$, $x^{-1}$ and $y$,
satisfying the following relations:
\begin{subequations}\label{eqn:noncommrel2}
\begin{align}
x^{-1}x&=xx^{-1}=1\\
yx &= w(1,1)xy,\\
x^{-1}y&=w(0,1)yx^{-1},\\
x w(s,t)&= w(s+1,t)x,\\
x^{-1} w(s,t)&= w(s-1,t)x^{-1},\\
y w(s,t)&= w(s,t+1)y,
\end{align}
\end{subequations}
for all $s\in\mathbb Z$ and $t\in\mathbb N$. 
\end{definition}
It is easy to see that the above relations are compatible with each other
and naturally extend \eqref{eqn:noncommrel}.

The following lemma which is easy to verify will be used in Section~\ref{sec:nc}.
\begin{lemma}\label{lem:inv}
Let $(w(s,t))_{s\in\mathbb Z,t\in\mathbb{N}}$ be a doubly-indexed sequence of
invertible indeterminates, and $x$ and $y$ variables with $x$ being invertible,
together forming the associative algebra $A=\mathbb{C}_w [x,x^{-1},y]$.
Then there is an involutive algebra isomorphism
$$\phi: A\to \widetilde{A}$$
where
$$\widetilde{A}=\mathbb{C}_{\widetilde{w}} [x^{-1},x,x^{-1}y]$$
with
\begin{equation}\label{eq:tw}
\widetilde{w}(s,t)=w(1-s-t,t)^{-1}.
\end{equation}
\end{lemma}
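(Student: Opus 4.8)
The plan is to define the map $\phi$ explicitly on generators and verify that it respects the defining relations of $A$, so that it extends to an algebra homomorphism, and then to check that it is an involution (hence an isomorphism onto its image $\widetilde A$). The natural guess, given the target algebra $\widetilde A=\mathbb C_{\widetilde w}[x^{-1},x,x^{-1}y]$, is to send the three generators of $A$ to the three listed generators of $\widetilde A$: set $\phi(x)=x^{-1}$, $\phi(x^{-1})=x$, and $\phi(y)=x^{-1}y$, and on the weights set $\phi(w(s,t))=\widetilde w(s,t)=w(1-s-t,t)^{-1}$. First I would confirm that this assignment is well defined, i.e. that each relation in \eqref{eqn:noncommrel2} is mapped to a valid relation (a consequence of the relations defining $\widetilde A$ together with the formula \eqref{eq:tw}).

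The core of the verification is a relation-by-relation check. The inverse relation $x^{-1}x=xx^{-1}=1$ is immediate since $\phi$ swaps $x$ and $x^{-1}$. For the weight-shift relations, applying $\phi$ to $x\,w(s,t)=w(s+1,t)\,x$ should yield $x^{-1}\widetilde w(s,t)=\widetilde w(s+1,t)x^{-1}$; using \eqref{eq:tw} this reads $x^{-1}w(1-s-t,t)^{-1}=w(-s-t,t)^{-1}x^{-1}$, which is exactly the inverse of the relation $x^{-1}w(-s-t,t)=w(-s-t-1,t)x^{-1}$ obtained from the fifth relation in \eqref{eqn:noncommrel2}. The genuinely delicate checks are the commutation relations mixing $y$ with $x^{\pm1}$. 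Applying $\phi$ to $yx=w(1,1)xy$ gives $(x^{-1}y)x^{-1}=\widetilde w(1,1)\,x^{-1}(x^{-1}y)$, and one must push the weights through correctly; the key is that $\widetilde w(1,1)=w(-1,1)^{-1}$ and that conjugating by $x^{-1}$ shifts the first argument of $w$ downward, so both sides collapse to the same normally ordered expression. The relation $x^{-1}y=w(0,1)yx^{-1}$ is handled analogously and should emerge as the same identity read in the opposite direction, consistent with $\phi$ being involutive.

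I would then verify that $\phi$ is an involution, i.e. $\phi^2=\mathrm{id}$, which simultaneously shows $\phi$ is bijective. On $x$ and $x^{-1}$ this is clear. On $y$ we compute $\phi^2(y)=\phi(x^{-1}y)=\phi(x^{-1})\phi(y)=x\cdot x^{-1}y=y$, using that $\phi$ is multiplicative. On the weights, the crucial arithmetic check is that applying the substitution $(s,t)\mapsto(1-s-t,t)$ twice returns $s$: indeed $1-(1-s-t)-t=s$, and since $\phi$ inverts the weight once, applying it twice gives $\big(w(1-s-t,t)^{-1}\big)\big|_{s\to 1-s-t}$ inverted again, which restores $w(s,t)$. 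This shows $\phi^2$ fixes all generators, so $\phi^2=\mathrm{id}$ and $\phi$ is an involutive isomorphism.

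I expect the main obstacle to be the bookkeeping in the mixed relations, specifically keeping track of how conjugation by $x^{-1}$ versus $x$ shifts the first index of the weights in opposite directions, and confirming that the off-by-one shifts built into \eqref{eq:tw} are precisely what is needed to make the image relations consistent with the defining relations of $\widetilde A$ rather than introducing a contradiction. Once the single relation $yx=w(1,1)xy$ is pushed through cleanly, the relation $x^{-1}y=w(0,1)yx^{-1}$ should follow by the same computation read backwards, and the remaining verifications are routine index arithmetic. A useful sanity check along the way is to confirm that the specific weights $\widetilde w(0,1)$ and $\widetilde w(1,1)$ appearing after transformation match the coefficients demanded by the relations of $\widetilde A$, which pins down the correctness of the formula \eqref{eq:tw}.
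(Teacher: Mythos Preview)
Your proposal is correct and follows essentially the same approach as the paper: the paper simply asserts that it is ``straightforward to check'' that the simultaneous replacement $w(s,t)\mapsto w(1-s-t,t)^{-1}$, $x\mapsto x^{-1}$, $y\mapsto x^{-1}y$ again satisfies the relations \eqref{eqn:noncommrel2}, which is exactly the relation-by-relation verification you outline, together with the involutivity check $\phi^2=\mathrm{id}$. Your write-up is more detailed than the paper's one-sentence justification but the content is the same.
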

It is indeed straightforward to check that the simultaneous replacement
of $w(s,t)$ ($s\in\mathbb Z$, $t\in\mathbb N$),
$x$ and $y$ in \eqref{eqn:noncommrel2}
by $w(1-s-t,t)^{-1}$, $x^{-1}$ and $x^{-1}y$, respectively,
again satisfies the conditions in \eqref{eqn:noncommrel2}.

As a consequence, given an identity in $w(s,t)$
($s\in\mathbb Z$, $t\in\mathbb N$), $x$ and $y$,
a new valid identity can be obtained by applying the
isomorphism $\phi$ to each of the occurring variables,
where in both identities the variables satisfy the same commutation
relations \eqref{eqn:noncommrel2}.

\subsection{Elliptic weights}

For nome $p\in\C$ with $|p|<1$, \emph{base} $q\in\C$, two independent variables
$a$ and $b$, and $(s,t)\in\Z^2$, we define the
\emph{small elliptic weights} to be
\begin{subequations}\label{eqn:ellwt}
\begin{equation}
w_{a,b;q,p}(s,t)=\frac{\theta(aq^{s+2t},bq^{2s+t-2},aq^{t-s-1}/b;p)}
{\theta(aq^{s+2t-2},bq^{2s+t},aq^{t-s+1}/b;p)}q,\label{eqn:ellipticwt}
\end{equation}
and the \emph{big elliptic weights} to be
\begin{equation}
W_{a,b;q,p}(s,t)=\frac{\theta(aq^{s+2t},bq^{2s},bq^{2s-1},
aq^{1-s}/b,aq^{-s}/b;p)}
{\theta(aq^{s},bq^{2s+t},bq^{2s+t-1},aq^{t-s+1}/b,aq^{t-s}/b;p)}q^t.
\label{eqn:ellipticbigwt}
\end{equation}
\end{subequations}
Notice that for $t\ge 0$ we have
\begin{equation*}
W_{a,b;q,p}(s,t)=\prod_{k=1}^t w_{a,b;q,p}(s,k).
\end{equation*}
Observe that
\begin{subequations}
\begin{equation}\label{wshift}
w_{a,b;q,p}(s+i,t+j)=w_{aq^{i+2j},bq^{2i+j};q,p}(s,t),
\end{equation}
and 
\begin{equation}\label{Wshift}
W_{a,b;q,p}(s,t+j)=W_{a,b;q,p}(s,j)\,
W_{aq^{2j},bq^j;q,p}(s,t),
\end{equation}
\end{subequations}
for all $s$, $t$, $i$ and $j$,
which are elementary identities we will make use of.

Further, using \eqref{eq:inv}, we see directly from \eqref{eqn:ellipticwt} that
\begin{equation}\label{winv}
w_{a,b;q,p}(1-s-t,t)^{-1}=w_{a/b,1/b;q,p}(s,t),
\end{equation}
which can be conveniently applied when using Lemma~\ref{lem:inv}. 

The terminology ``elliptic'' for the above small and big weights
is indeed justified, as the small weight $w_{a,b;q,p}(s,k)$ (and also the
big weight) is elliptic in each of its parameters (i.e., these weights are
even ``totally elliptic''). Writing $q=e^{2\pi i\sigma}$,
$p=e^{2\pi i\tau}$, $a=q^\alpha$ and $b=q^\beta$ with complex $\sigma$,
$\tau$, $\alpha$, $\beta$, $s$ and $k$, then the small weight
$w_{a,b;q,p}(s,k)$ is clearly periodic in $\alpha$ with period $\sigma^{-1}$.
Also, using \eqref{eq:qpp}, we can see that
$w_{a,b;q,p}(s,k)$ is also periodic in $\alpha$ with period $\tau\sigma^{-1}$.
The same applies to $w_{a,b;q,p}(s,k)$ as a function in $\beta$ (or $s$ or $k$)
with the same two periods $\sigma^{-1}$ and $\tau\sigma^{-1}$.

Next, we define (cf.\  \cite[Ch.\ 11]{GRhyp})
the {\em theta shifted factorial}
(or {\em $q,p$-shifted factorial}), by
\begin{equation*}
(a;q,p)_n = \begin{cases}
\displaystyle\prod^{n-1}_{j=0} \theta (aq^j;p),& n = 1, 2, \ldots\,,\cr
1,& n = 0,\cr
\displaystyle1/\prod^{-n-1}_{j=0} \theta (aq^{n+j};p),& n = -1, -2, \ldots,
\end{cases}
\end{equation*}
and write
\begin{equation*}
(a_1, \ldots, a_m;q, p)_n = (a_1;q,p)_n\ldots(a_m;q,p)_n,
\end{equation*}
for their products.
For $p=0$ we have  $\theta (x;0) = 1-x$ and, hence, $(a;q, 0)_n = (a;q)_n
=(1-a)(1-aq)\dots(1-aq^{n-1})$
is a {\em $q$-shifted factorial} in base $q$.

Now, the \emph{elliptic binomial coefficients} \cite{Schl1}
\begin{equation}\label{ellbin}
\begin{bmatrix}n\\k\end{bmatrix}_{a,b;q,p}:=
\frac{(q^{1+k},aq^{1+k},bq^{1+k},aq^{1-k}/b;q,p)_{n-k}}
{(q,aq,bq^{1+2k},aq/b;q,p)_{n-k}},
\end{equation}
together with the big elliptic weights defined in \eqref{eqn:ellipticbigwt},
can be seen to satisfy the recursion \eqref{wbineq}, as a consequence
of the addition formula \eqref{eq:addf}.

Note that the elliptic binomial coefficients in \eqref{ellbin}
generalize the familiar $q$-binomial coefficients, which can be obtained
by letting $p\to 0$, $a\to 0$, then $b\to 0$, in this order.
These are defined by 
\begin{equation*}
\begin{bmatrix}n\\k\end{bmatrix}_q:=
\frac{(q^{1+k};q)_{n-k}}
{(q;q)_{n-k}},
\end{equation*}
where
\begin{equation*}
(a;q)_n= 
\begin{cases}
\displaystyle \prod^{n-1}_{j=0} (1-aq^j),& n = 1, 2, \ldots\,,\cr
1,& n = 0,\cr
\displaystyle 1/\prod^{-n-1}_{j=0} (1-aq^{n+j}),& n = -1, -2, \ldots
\end{cases}
\end{equation*}
are the $q$-shifted factorials.

As the $q$-binomial coefficients satisfy two recurrence relations
$$
\begin{bmatrix}n+1\\k\end{bmatrix}_q=
\begin{bmatrix}n\\k\end{bmatrix}_q+
\begin{bmatrix}n\\k-1\end{bmatrix}_q q^{n+1-k},\qquad
\begin{bmatrix}n+1\\k\end{bmatrix}_q=
\begin{bmatrix}n\\k\end{bmatrix}_q q^k+
\begin{bmatrix}n\\k-1\end{bmatrix}_q,
$$
and the recurrence relation \eqref{wbineq} corresponds to the first identity,
the elliptic binomial coefficients satisfy a second recurrence relation as well.
While the relation  \eqref{wbineq} is established by considering the
generating function of all weighted paths from the origin to the point $(k,n+1-k)$
and separating them into two subsets depending on whether the last step
is vertical or horizontal, the following result can be similarly verified by
separating the same set of paths into two subsets depending on whether
the first step is vertical or horizontal.
\begin{proposition}\label{prop:2ndrec}
We have
$$
\begin{bmatrix}n+1\\k\end{bmatrix}_{a,b;q,p}=
\begin{bmatrix}n\\k\end{bmatrix}_{aq^2, bq;q,p}\,
\prod_{j=1}^k W_{a,b;q,p}(j,1) +
\begin{bmatrix}n\\k-1\end{bmatrix}_{aq, bq^2;q,p}.$$
\end{proposition}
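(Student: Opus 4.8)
The plan is to mimic the combinatorial derivation already used for the first recursion \eqref{wbineq}, but now split the weighted lattice paths according to their \emph{first} step rather than their last. Recall from \eqref{eqn:wbin} that $\begin{bmatrix}n+1\\k\end{bmatrix}_{a,b;q,p}$ equals $w(\mathcal{P}((0,0)\to(k,n+1-k)))$, where each east step $(s-1,t)\to(s,t)$ carries the big elliptic weight $W_{a,b;q,p}(s,t)$ and each north step carries weight $1$. First I would partition $\mathcal{P}((0,0)\to(k,n+1-k))$ into those paths beginning with a horizontal step and those beginning with a vertical step, and analyze each class by shifting the base point to $(1,0)$ or $(0,1)$ respectively, so that the remaining portion of each path is a path of total length $n$.

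The key point is to track how the weights transform under these base-point shifts. For a path starting with a north step, the remaining path goes from $(0,1)$ to $(k,n+1-k)$; translating coordinates down by one in the $t$-direction, the east step weight $W_{a,b;q,p}(s,t)$ appearing at height $t$ becomes $W_{a,b;q,p}(s,t+1)$ relative to the shifted origin, which by the shift relation \eqref{Wshift} factors as $W_{a,b;q,p}(s,1)\,W_{aq^2,bq;q,p}(s,t)$. The product of the ``extra'' factors $W_{a,b;q,p}(s,1)$ over the $k$ east steps (one at each column $s=1,\dots,k$, since the path must eventually traverse all $k$ columns) yields exactly $\prod_{j=1}^k W_{a,b;q,p}(j,1)$, while the residual weights $W_{aq^2,bq;q,p}(s,t)$ assemble into $\begin{bmatrix}n\\k\end{bmatrix}_{aq^2,bq;q,p}$. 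For a path starting with an east step, the remaining path goes from $(1,0)$ to $(k,n+1-k)$; here the $s$-index shifts, and by \eqref{wshift} (applied to the big weights, or directly from the definition \eqref{eqn:ellipticbigwt}) the residual weights reproduce $\begin{bmatrix}n\\k-1\end{bmatrix}_{aq,bq^2;q,p}$ with no leftover factor.

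I expect the main obstacle to be the bookkeeping in the north-first case: I must verify carefully that the factor $\prod_{j=1}^k W_{a,b;q,p}(j,1)$ really does pull out cleanly as a \emph{global} prefactor independent of the individual path, and that after extracting it the parameter shift is uniformly $(a,b)\mapsto(aq^2,bq)$ across every east step regardless of its height. This hinges on the multiplicativity in \eqref{Wshift} holding columnwise and on the observation that every path to $(k,\cdot)$ uses each column exactly once, so the prefactor depends only on $k$. An alternative, cleaner route would bypass paths entirely: one could simply substitute the closed form \eqref{ellbin} into both sides and reduce the proposed identity to an instance of the addition formula \eqref{eq:addf}, exactly as was done for the level-dependent solution in Section~\ref{sec:dyn}; this algebraic verification may in fact be the more reliable confirmation, with the path argument serving as the conceptual explanation. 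Either way, the essential input is the same theta-function addition formula that underlies the first recursion.
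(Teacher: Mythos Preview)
Your proposal is correct and follows exactly the approach the paper indicates: the paper's own justification is the single sentence that the identity ``can be similarly verified by separating the same set of paths into two subsets depending on whether the first step is vertical or horizontal,'' and your argument carries this out in detail, using \eqref{Wshift} and \eqref{wshift} precisely as needed to handle the parameter shifts. Your alternative algebraic verification via \eqref{ellbin} and \eqref{eq:addf} is a sound backup, though the paper does not pursue it.
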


\begin{definition}\label{def:Cellxy}
Let $x,y,a,b$ be four variables with $ab=ba$ and $q,p$ be two complex numbers
with $|p|<1$. We define $\mathbb{C}_{a,b;q,p}[x,y]$ to be the
unital associative algebra over $\mathbb{C}$, generated by $x$ and $y$,
satisfying the following commutation relations 
\begin{subequations}\label{abqxy}
\begin{align}
yx &= \frac{\theta(aq^3,bq,a/bq;p)}{\theta(aq,bq^3,aq/b;p)}qxy,\label{eqn:xy}\\
x\,f(a,b)&= f(aq,bq^2)x,\label{eqn:x}\\
y\,f(a,b)&=  f(aq^2,bq)y,\label{eqn:y}
\end{align}
\end{subequations}
where $f(a,b)$ is any function that is
multiplicatively $p$-periodic in $a$ and $b$,
i.e., which satisfies $f(pa,b)=f(a,pb)=f(a,b)$.
\end{definition}

The relations in \eqref{abqxy} are essentially an elliptic realization of
the relations in \eqref{eqn:noncommrel}.
In particular, \eqref{eqn:xy} can be written as $yx=w(1,1)xy$ with
$w(s,t)=w_{a,b;q,p}(s,t)$ being the small elliptic weight in
\eqref{eqn:ellipticwt}.

We refer to the variables $x,y,a,b$ forming $\mathbb{C}_{a,b;q,p}[x,y]$
as \emph{elliptic-commuting} variables. The algebra 
$\mathbb{C}_{a,b;q,p}[x,y]$ formally reduces to $\mathbb{C}_q[x,y]$
if one lets $p\to 0$, $a\to 0$, then $b\to 0$ (in this order),
while, having eliminated the nome $p$, relaxing the two conditions
of multiplicative $p$-periodicity.

In $\mathbb{C}_{a,b;q,p}[x,y]$ the following binomial theorem holds
as a consequence of Theorem~\ref{thm:binomial}
(cf.\ \cite{Schl1}):
\begin{equation}\label{eqn:ab_binom}
(x+y)^n =\sum_{k=0}^n
\begin{bmatrix}n\\k\end{bmatrix}_{a,b;q,p}x^k y^{n-k}.
\end{equation}

It is now straightforward to extend  $\mathbb{C}_{a,b;q,p}[x,y]$
in the spirit of Definition~\ref{def:Cwxy2} to an algebra we name
$\mathbb{C}_{a,b;q,p}[x,x^{-1},y]$ (and keep referring to as
algebra of elliptic-commuting variables).
\begin{definition}\label{def:Cellxy2}
Let $x,y,a,b$ be four variables, $x$ invertible, with $ab=ba$ and
$q,p$ be two complex numbers with $|p|<1$.
We define $\mathbb{C}_{a,b;q,p}[x,x^{-1},y]$ to be the
unital associative algebra over $\mathbb{C}$, generated by $x$, $x^{-1}$,
and $y$, satisfying the following commutation relations 
\begin{subequations}\label{abqxy2}
\begin{align}
x^{-1}x&=xx^{-1}=1\\
yx &= \frac{\theta(aq^3,bq,a/bq;p)}{\theta(aq,bq^3,aq/b;p)}qxy,\label{eqn:xy2}\\
x^{-1}y&=\frac{\theta(aq^2,b/q,a/b;p)}{\theta(a,bq,aq^2/b;p)}qyx^{-1},\\
x\,f(a,b)&= f(aq,bq^2)x,\label{eqn:x2}\\
x^{-1}\,f(a,b)&= f(aq^{-1},bq^{-2})x^{-1},\label{eqn:x3}\\
y\,f(a,b)&=  f(aq^2,bq)y,\label{eqn:y2}
\end{align}
\end{subequations}
where $f(a,b)$ is any function that is
multiplicatively $p$-periodic in $a$ and $b$.
\end{definition}
Again, it is not difficult to see that the conditions in \eqref{abqxy2}
are compatible with each other and naturally extend those in \eqref{abqxy}
by adding relations involving $x^{-1}$.

\section{Noncommutative Fibonacci polynomials}\label{sec:nc}

In the following, we shall first assume $x$ and $y$ (which in
Section~\ref{sec:pre} were prescribed to satisfy specific commutation
relations) to be non-commutative variables without any relation
connecting them; we shall only later specialize $x$ and $y$
when explicitly stated.

The material in this section, up to \eqref{eq:nCassini}, is essentially
a review of work done by Johann Cigler and is included here for convenience
and self-containedness.

The \emph{noncommutative} (or \emph{abstract}) Fibonacci polynomials of
Cigler~\cite{C,C2} are defined by $F_0(x,y)=1$, $F_1(x,y)=y$ and
\begin{subequations}\label{eq:Frec}
\begin{align}
F_{n+2}(x,y)&=F_n(x,y) x+ F_{n+1}(x,y) y,\label{eq:Frec-a}\\
\intertext{or equivalently}
F_{n+2}(x,y)&=x\, F_n(x,y)+y\, F_{n+1}(x,y),\label{eq:Frec-b}
\end{align}
\end{subequations}
for all $n\ge 0$.

The equivalence of \eqref{eq:Frec-a} and \eqref{eq:Frec-b}
will be shown later. (See the explanation right after \eqref{eq:sumF}.)
Combinatorially, $F_n (x,y)$ represents the sum of the weights of all possible
ordered tilings of a $1\times n$ board in $1\times 2$ dominoes weighted with $x$ and 
$1\times 1$ squares weighted with $y$. (This also explains why the two
recurrences in \eqref{eq:Frec} are equivalent.)

\begin{example}
\begin{align*}
F_0 (x,y)&= 1;\\
F_1 (x,y)& = y ~; ~ \begin{tikzpicture}[scale=.5]
\draw (0,0)--(1,0)--(1,1)--(0,1)--(0,0)--cycle;
\fill[fill=blue!10!white] (.1, .1) rectangle (.9, .9);
\draw[thick, color=dbluecolor] (.1,.1)--(.1, .9)--(.9, .9)--(.9, .1)--(.1, .1)--cycle;
\node[] at (.5, .5) {\color{blue}$y$};
\end{tikzpicture}\\
F_2(x,y) & = x+ y^2 ~;~
 \begin{tikzpicture}[scale=.5]
\draw (0,0)--(2,0)--(2,1)--(0,1)--(0,0)--cycle;
\fill[fill=red!10!white] (.1, .1) rectangle (1.9, .9);
\draw[thick, color=dredcolor] (.1,.1)--(.1, .9)--(1.9, .9)--(1.9, .1)--(.1, .1)--cycle;
\node[] at (1, .5) {\color{dredcolor}$x$};
\end{tikzpicture}, ~
 \begin{tikzpicture}[scale=.5]
\draw (0,0)--(2,0)--(2,1)--(0,1)--(0,0)--cycle;
\fill[fill=blue!10!white] (.1, .1) rectangle (.9, .9);
\draw[thick, color=dbluecolor] (.1,.1)--(.1, .9)--(.9, .9)--(.9, .1)--(.1, .1)--cycle;
\fill[fill=blue!10!white] (1.1, .1) rectangle (1.9, .9);
\draw[thick, color=dbluecolor] (1.1,.1)--(1.1, .9)--(1.9, .9)--(1.9, .1)--(1.1, .1)--cycle;
\node[] at (.5, .5) {\color{blue}$y$};
\node[] at (1.5, .5) {\color{blue}$y$};
\end{tikzpicture}\\
F_3(x,y) &= xy+yx+y^3 ~;~ \begin{tikzpicture}[scale=.5]
\draw (0,0)--(3,0)--(3,1)--(0,1)--(0,0)--cycle;
\fill[fill=red!10!white] (.1, .1) rectangle (1.9, .9);
\draw[thick, color=dredcolor] (.1,.1)--(.1, .9)--(1.9, .9)--(1.9, .1)--(.1, .1)--cycle;
\node[] at (1, .5) {\color{dredcolor}$x$};
\fill[fill=blue!10!white] (2.1, .1) rectangle (2.9, .9);
\draw[thick, color=dbluecolor] (2.1,.1)--(2.1, .9)--(2.9, .9)--(2.9, .1)--(2.1, .1)--cycle;
\node[] at (2.5, .5) {\color{blue}$y$};
\end{tikzpicture},~
 \begin{tikzpicture}[scale=.5]
\draw (0,0)--(3,0)--(3,1)--(0,1)--(0,0)--cycle;
\fill[fill=red!10!white] (1.1, .1) rectangle (2.9, .9);
\draw[thick, color=dredcolor] (1.1,.1)--(1.1, .9)--(2.9, .9)--(2.9, .1)--(1.1, .1)--cycle;
\node[] at (2, .5) {\color{dredcolor}$x$};
\fill[fill=blue!10!white] (.1, .1) rectangle (.9, .9);
\draw[thick, color=dbluecolor] (.1,.1)--(.1, .9)--(.9, .9)--(.9, .1)--(.1, .1)--cycle;
\node[] at (.5, .5) {\color{blue}$y$};
\end{tikzpicture},~
 \begin{tikzpicture}[scale=.5]
\draw (0,0)--(3,0)--(3,1)--(0,1)--(0,0)--cycle;
\fill[fill=blue!10!white] (.1, .1) rectangle (.9, .9);
\draw[thick, color=dbluecolor] (.1,.1)--(.1, .9)--(.9, .9)--(.9, .1)--(.1, .1)--cycle;
\fill[fill=blue!10!white] (1.1, .1) rectangle (1.9, .9);
\draw[ thick,color=dbluecolor] (1.1,.1)--(1.1, .9)--(1.9, .9)--(1.9, .1)--(1.1, .1)--cycle;
\node[] at (.5, .5) {\color{blue}$y$};
\node[] at (1.5, .5) {\color{blue}$y$};
\fill[fill=blue!10!white] (2.1, .1) rectangle (2.9, .9);
\draw[ thick,color=dbluecolor] (2.1,.1)--(2.1, .9)--(2.9, .9)--(2.9, .1)--(2.1, .1)--cycle;
\node[] at (2.5, .5) {\color{blue}$y$};
\end{tikzpicture}\\
F_4(x,y) &= x^2+xy^2+yxy+y^2x+y^4 ~;~ \begin{tikzpicture}[scale=.5]
\draw (0,0)--(4,0)--(4,1)--(0,1)--(0,0)--cycle;
\fill[fill=red!10!white] (.1, .1) rectangle (1.9, .9);
\draw[thick, color=dredcolor] (.1,.1)--(.1, .9)--(1.9, .9)--(1.9, .1)--(.1, .1)--cycle;
\node[] at (1, .5) {\color{dredcolor}$x$};
\fill[fill=red!10!white] (2.1, .1) rectangle (3.9, .9);
\draw[thick, color=dredcolor] (2.1,.1)--(2.1, .9)--(3.9, .9)--(3.9, .1)--(2.1, .1)--cycle;
\node[] at (3, .5) {\color{dredcolor}$x$};
\end{tikzpicture}, ~
 \begin{tikzpicture}[scale=.5]
\draw (0,0)--(4,0)--(4,1)--(0,1)--(0,0)--cycle;
\fill[fill=red!10!white] (.1, .1) rectangle (1.9, .9);
\draw[thick, color=dredcolor] (.1,.1)--(.1, .9)--(1.9, .9)--(1.9, .1)--(.1, .1)--cycle;
\node[] at (1, .5) {\color{dredcolor}$x$};
\fill[fill=blue!10!white] (2.1, .1) rectangle (2.9, .9);
\draw[thick, color=dbluecolor] (2.1,.1)--(2.1, .9)--(2.9, .9)--(2.9, .1)--(2.1, .1)--cycle;
\fill[fill=blue!10!white] (3.1, .1) rectangle (3.9, .9);
\draw[thick, color=dbluecolor] (3.1,.1)--(3.1, .9)--(3.9, .9)--(3.9, .1)--(3.1, .1)--cycle;
\node[] at (2.5, .5) {\color{blue}$y$};
\node[] at (3.5, .5) {\color{blue}$y$};
\end{tikzpicture}, ~\\
&\qquad\qquad\qquad\qquad
\begin{tikzpicture}[scale=.5]
\draw (0,0)--(4,0)--(4,1)--(0,1)--(0,0)--cycle;
\fill[fill=blue!10!white] (.1, .1) rectangle (.9, .9);
\draw[thick, color=dbluecolor] (.1,.1)--(.1, .9)--(.9, .9)--(.9, .1)--(.1, .1)--cycle;
\fill[fill=red!10!white] (1.1, .1) rectangle (2.9, .9);
\draw[thick, color=dredcolor] (1.1,.1)--(1.1, .9)--(2.9, .9)--(2.9, .1)--(1.1, .1)--cycle;
\node[] at (2, .5) {\color{dredcolor}$x$};
\fill[fill=blue!10!white] (3.1, .1) rectangle (3.9, .9);
\draw[thick, color=dbluecolor] (3.1,.1)--(3.1, .9)--(3.9, .9)--(3.9, .1)--(3.1, .1)--cycle;
\node[] at (3.5, .5) {\color{blue}$y$};
\node[] at (.5, .5) {\color{blue}$y$};
\end{tikzpicture},~
 \begin{tikzpicture}[scale=.5]
\draw (0,0)--(4,0)--(4,1)--(0,1)--(0,0)--cycle;
\fill[fill=blue!10!white] (.1, .1) rectangle (.9, .9);
\draw[thick, color=dbluecolor] (.1,.1)--(.1, .9)--(.9, .9)--(.9, .1)--(.1, .1)--cycle;
\fill[fill=red!10!white] (2.1, .1) rectangle (3.9, .9);
\draw[thick, color=dredcolor] (2.1,.1)--(2.1, .9)--(3.9, .9)--(3.9, .1)--(2.1, .1)--cycle;
\node[] at (3, .5) {\color{dredcolor}$x$};
\fill[fill=blue!10!white] (1.1, .1) rectangle (1.9, .9);
\draw[thick, color=dbluecolor] (1.1,.1)--(1.1, .9)--(1.9, .9)--(1.9, .1)--(1.1, .1)--cycle;
\node[] at (1.5, .5) {\color{blue}$y$};
\node[] at (.5, .5) {\color{blue}$y$};
\end{tikzpicture},~
 \begin{tikzpicture}[scale=.5]
 \draw (0,0)--(4,0)--(4,1)--(0,1)--(0,0)--cycle;
\fill[fill=blue!10!white] (.1, .1) rectangle (.9, .9);
\draw[thick, color=dbluecolor] (.1,.1)--(.1, .9)--(.9, .9)--(.9, .1)--(.1, .1)--cycle;
\fill[fill=blue!10!white] (1.1, .1) rectangle (1.9, .9);
\draw[thick, color=dbluecolor] (1.1,.1)--(1.1, .9)--(1.9, .9)--(1.9, .1)--(1.1, .1)--cycle;
\fill[fill=blue!10!white] (2.1, .1) rectangle (2.9, .9);
\draw[ thick,color=dbluecolor] (2.1,.1)--(2.1, .9)--(2.9, .9)--(2.9, .1)--(2.1, .1)--cycle;
\node[] at (1.5, .5) {\color{blue}$y$};
\node[] at (2.5, .5) {\color{blue}$y$};
\fill[fill=blue!10!white] (3.1, .1) rectangle (3.9, .9);
\draw[ thick,color=dbluecolor] (3.1,.1)--(3.1, .9)--(3.9, .9)--(3.9, .1)--(3.1, .1)--cycle;
\node[] at (3.5, .5) {\color{blue}$y$};
\node[] at (.5, .5) {\color{blue}$y$};
\end{tikzpicture}
\end{align*}
\end{example}


Let
$$C(x,y):= \left( \begin{matrix}0 & 1\\ x & y\end{matrix}\right).$$
Then the $n$-th power of the matrix $C(x,y)$ can be expressed nicely in terms of
the non-commutative Fibonacci polynomials (as was already shown by
Cigler~\cite[Equation~(3.2)]{C2}). 

\begin{proposition}\label{propCn}
$$C^n (x,y)= \left( \begin{matrix}F_{n-2}(x,y) x & F_{n-1}(x,y)\\
F_{n-1}(x,y)x & F_{n}(x,y)\end{matrix}\right),$$
for $n\ge 2$.
\end{proposition}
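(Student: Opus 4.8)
The plan is to prove the identity of Proposition~\ref{propCn} by induction on $n$, using $C^{n+1}(x,y)=C^n(x,y)\,C(x,y)$ so that the entries of $C^{n+1}$ arise from those of $C^n$ by right-multiplication, which is precisely the operation encoded in the recurrence \eqref{eq:Frec-a}.

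For the base case $n=2$ I would multiply $C(x,y)$ by itself, obtaining $C^2(x,y)=\bigl(\begin{smallmatrix}x&y\\ yx&x+y^2\end{smallmatrix}\bigr)$, and compare this with $\bigl(\begin{smallmatrix}F_0(x,y)\,x&F_1(x,y)\\ F_1(x,y)\,x&F_2(x,y)\end{smallmatrix}\bigr)$. Since $F_0(x,y)=1$, $F_1(x,y)=y$ and $F_2(x,y)=F_0(x,y)\,x+F_1(x,y)\,y=x+y^2$ by \eqref{eq:Frec-a}, the two matrices agree entry by entry.

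For the inductive step, assuming the asserted form of $C^n(x,y)$ for some $n\ge 2$, I would multiply it on the right by $C(x,y)$ to get
\[
C^{n+1}(x,y)=\begin{pmatrix}F_{n-1}x & F_{n-2}x+F_{n-1}y\\[1mm] F_n x & F_{n-1}x+F_n y\end{pmatrix},
\]
where I suppress the arguments $(x,y)$ throughout. The two entries in the first column are already of the required shape $F_{n-1}x$ and $F_n x$. For the second column, the recurrence \eqref{eq:Frec-a}, in the forms $F_n=F_{n-2}x+F_{n-1}y$ and $F_{n+1}=F_{n-1}x+F_n y$, collapses the top and bottom entries to $F_n$ and $F_{n+1}$, respectively. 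The resulting matrix is exactly the claimed expression for $C^{n+1}(x,y)$, completing the induction.

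I expect no genuine obstacle: the argument is a direct induction and the combinatorial tiling interpretation of $F_n(x,y)$ even renders it transparent. The only point demanding care is that $x$ and $y$ do not commute, so the order of all factors must be preserved and the correct one-sided recurrence invoked. Right-multiplication by $C$ pairs with \eqref{eq:Frec-a}; had I instead expanded $C^{n+1}=C\,C^n$, the very same computation would go through but would call on the left-multiplication recurrence \eqref{eq:Frec-b}. Because the equivalence of \eqref{eq:Frec-a} and \eqref{eq:Frec-b} is only established later in the paper, I would fix the convention $C^{n+1}=C^n\,C$ and run the induction using \eqref{eq:Frec-a} alone.
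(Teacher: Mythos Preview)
Your proof is correct and is essentially identical to the paper's own argument: both establish the base case $n=2$ by direct multiplication and carry out the inductive step via $C^{n+1}=C^n\,C$ together with the right-multiplication recurrence \eqref{eq:Frec-a}. The paper likewise remarks parenthetically that one could equally well use $C^{n+1}=C\,C^n$ with \eqref{eq:Frec-b}.
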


\begin{proof}
We proceed by induction.\\
For $n=2$, 
\begin{equation*}
C^2 (x,y)= \left( \begin{matrix}0 & 1\\ x & y\end{matrix}\right)\left( \begin{matrix}0 & 1\\ x & y\end{matrix}\right)=\left( \begin{matrix}x & y\\ yx &x+ y^2\end{matrix}\right)
= \left( \begin{matrix}F_{0}(x,y) x & F_{1}(x,y)\\ F_{1}(x,y)x & F_{2}(x,y)\end{matrix}\right).
\end{equation*}
Suppose that 
$$C^{n-1} (x,y)= \left( \begin{matrix}F_{n-3}(x,y) x & F_{n-2}(x,y)\\
F_{n-2}(x,y)x & F_{n-1}(x,y)\end{matrix}\right)$$
holds for some $n-1\ge 2$. Then
\begin{align*}
C^n (x, y) &= C^{n-1} (x,y) C(x,y)\\
&=  \left( \begin{matrix}F_{n-3}(x,y) x & F_{n-2}(x,y)\\
F_{n-2}(x,y)x & F_{n-1}(x,y)\end{matrix}\right)
\left( \begin{matrix}0 & 1\\ x & y\end{matrix}\right)\\
&=  \left( \begin{matrix}F_{n-2}(x,y) x & F_{n-3}(x,y)x + F_{n-2}(x,y)y \\
F_{n-1}(x,y)x & F_{n-2}(x,y)x+F_{n-1}(x,y)y \end{matrix}\right) \\
&=  \left( \begin{matrix}F_{n-2}(x,y) x & F_{n-1}(x,y)\\
F_{n-1}(x,y)x & F_{n}(x,y)\end{matrix}\right),
\end{align*}
by \eqref{eq:Frec-a}.
(Similary, we could have used $C^n(x,y)=C(x,y)C^{n-1} (x,y)$ instead,
in combination with \eqref{eq:Frec-b}.)
\end{proof}

Since $C^{m+n}(x,y)= C^m (x,y) C^n (x,y)$, we have 
\begin{align*}
C^{m+n}(x,y)
 &= \left( \begin{matrix}F_{m+n-2}(x,y) x & F_{m+n-1}(x,y)\\
 F_{m+n-1}(x,y)x & F_{m+n}(x,y)\end{matrix}\right),\\
\intertext{and}
C^m (x,y) C^n (x,y)
&= \left( \begin{matrix}F_{m-2}(x,y) x & F_{m-1}(x,y)\\
F_{m-1}(x,y)x & F_{m}(x,y)\end{matrix}\right) 
\left( \begin{matrix}F_{n-2}(x,y) x & F_{n-1}(x,y)\\
F_{n-1}(x,y)x & F_{n}(x,y)\end{matrix}\right).
\end{align*}
By comparing the components, we obtain the formula
\begin{equation}\label{eq:sumF}
F_{m+n}(x,y)= F_{m-1}(x,y) \, x \, F_{n-1}(x,y) 
+ F_m (x,y) \, F_n (x,y) .
\end{equation}
We proved \eqref{eq:sumF} using the recurrence \eqref{eq:Frec-a},
which can be recovered from the former by letting $(m,n)\mapsto(n+1,1)$.
However, \eqref{eq:sumF} also includes the recurrence \eqref{eq:Frec-b},
obtained by letting $(m,n)\mapsto(1,n+1)$. This shows that the
two recurrences in \eqref{eq:Frec} are indeed equivalent.

From a combinatorial view, the identity \eqref{eq:sumF} is clear.
A tiling of the $1\times (m+n)$ board can be split into two independent
tilings of lengths $m$ and $n$, or there is a domino in the middle,
right between two independent tilings of respective lengths $m-1$ and $n-1$.

We now use \eqref{eq:sumF} in conjunction with negatively indexed
non-commutative Fibonacci polynomials to obtain a non-commutative
(Euler--)Cassini identity.
In fact, one can simply use the recurrences in \eqref{eq:Frec}  to define 
non-commutative Fibonacci polynomials of negative index.
It turns out that these happen to be polynomials in $x^{-1}$
(so we must assume $x$ to be invertible).
In particular, application of \eqref{eq:Frec} in the negative direction gives
\begin{align*}
F_{-1}(x,y)&=0,\\
F_{-2}(x,y)&=x^{-1},\\
F_{-3}(x,y)&=-x^{-1}yx^{-1},\\
F_{-4}(x,y)&=x^{-2}+x^{-1}yx^{-1}yx^{-1},\\
F_{-5}(x,y)&=-x^{-2}yx^{-1}-x^{-1}yx^{-2}-x^{-1}yx^{-1}yx^{-1}yx^{-1},\\
F_{-6}(x,y)&=x^{-3}+x^{-2}yx^{-1}yx^{-1}+x^{-1}yx^{-2}yx^{-1}\\
&\quad\;+x^{-1}yx^{-1}yx^{-2}+x^{-1}yx^{-1}yx^{-1}yx^{-1}yx^{-1}.
\end{align*}
It is easy to use \eqref{eq:Frec} and induction to prove that
\begin{equation}\label{eq:Fneg}
F_{-n}(x,y)=(-1)^{n}F_{n-2}\big(x^{-1},x^{-1}y\big)\,x^{-1},
\end{equation}
for all integers $n$.
The formula \eqref{eq:Fneg} was also obtained by Cigler
in \cite[Equation after (3.4)]{C2}.

We can also use matrices to arrive at negatively indexed
noncommutative Fibonacci polynomials, namely
$$
C^{-1}(x,y)=\left( \begin{matrix}-x^{-1}y & x^{-1}\\ 1 & 0\end{matrix}\right),
$$
which satisfies $C^{-1}(x,y)C(x,y)=C(x,y)C^{-1}(x,y)=I_2$;
Proposition~\ref{propCn} is easily seen to extend to all integers $n$
with the negatively indexed noncommutative Fibonacci polynomials
(defined recursively by \eqref{eq:Frec}, and which can be expressed by
the non-negatively indexed noncommutative Fibonacci polynomials
by \eqref{eq:Fneg}). This was actually how Cigler arrived at
 \eqref{eq:Fneg} in \cite{C2}.
This shows that \eqref{eq:sumF}, which we originally
proved for positive integers $m$ and $n$, actually holds for all integers
$m$ and $n$.

We now let $m=-n$ in \eqref{eq:sumF} and multiply both sides of the identity
by $(-1)^{n}$ and arrive, after two applications of \eqref{eq:Fneg}
at the \emph{non-commutative Cassini identity} (cf.\ \cite[Section~3]{C2})
\begin{equation}\label{eq:nCassini}
(-1)^n= F_{n-2}\big(x^{-1},x^{-1}y\big)\, x^{-1}\, F_n (x,y)
- F_{n-1}\big(x^{-1},x^{-1}y\big) \, F_{n-1}(x,y) ,
\end{equation}
which is valid for all integers $n$.

More generally, we may take $(m,n)\mapsto(-n,n+k)$
in \eqref{eq:sumF} and multiply both sides of the identity
by $(-1)^{n}$ and arrive, after two applications of \eqref{eq:Fneg},
at the \emph{non-commutative Euler--Cassini identity} 
\begin{align}\label{eq:nEulerCassini}
(-1)^nF_k(x,y)
&= F_{n-2}\big(x^{-1},x^{-1}y\big)\, x^{-1}\, F_{n+k} (x,y)\notag\\
&\quad\,- F_{n-1}\big(x^{-1},x^{-1}y\big) \, F_{n+k-1}(x,y) ,
\end{align}
which is valid for all integers $n$ and $k$.

\begin{remark}
We would like to mention that in the classical case the Cassini identity
is usually obtained by taking the determinants of the $n$-th power of the
Fibonacci matrix
$\begin{smallmatrix}\begin{pmatrix}0&1\\1&1\end{pmatrix}\end{smallmatrix}$
and using the property that the determinant of matrices with commuting entries
is multiplicative. This method to obtain a Cassini identity requires adaptation in the
non-commutative setting since the determinant is in general not multiplicative
if the matrices contain entries that do not commute.
In some special cases (in particular when considering quantum matrix
representations of quantum groups) this can be remedied by suitably
modifying the definition of determinant and by considering
\emph{quantum determinants} instead where the entries of the matrices
obey certain commutation relations.

We can apply this construction with the necessary adaptions here as well.
For four polynomials $a\big(x,x^{-1},y\big)$, $b\big(x,x^{-1},y\big)$
$c\big(x,x^{-1},y\big)$ $d\big(x,x^{-1},y\big)$ in the non-commuting variables
$x$, $x^{-1}$ and $y$
(with $x^{-1}x=xx^{-1}=1$), over some ground field $K$
(say $K=\mathbb C$), define the noncommutative determinant
$\overrightarrow{\det}$ by 
\begin{align}
&\overrightarrow{\det}\begin{pmatrix}a\big(x,x^{-1},y\big)&b\big(x,x^{-1},y\big)\\
c\big(x,x^{-1},y\big)&d\big(x,x^{-1},y\big)\end{pmatrix}\notag\\&:=
a\big(x^{-1},x,x^{-1}y\big)\,d\big(x,x^{-1},y\big)-
c\big(x^{-1},x,x^{-1}y\big)\,x\,b\big(x,x^{-1},y\big).
\end{align}
Now $\overrightarrow{\det}$ is in general not multiplicative
but for suitable choices of the matrices it is. This in particular applies to
matrices given by any integer power of $C(x,y)$, as one can easily verify.
Now taking the noncommutative determinant
$\overrightarrow{\det}$ of $C^n(x,y)$ and comparing it with the
$n$-th power of $\overrightarrow{\det}\, C(x,y)=-1$, we readily
obtain the non-commutative Cassini identity \eqref {eq:nCassini}.
\end{remark}

\subsection{Noncommutative weight-dependent Fibonacci polynomials}
We consider the noncommutative Fibonacci polynomials $F_n(x,y)$
with additional weight-dependent commutation relations imposed
to involve a doubly indexed sequence of invertible weights
$(w(s,t))_{s\in\mathbb Z, t\in\mathbb N}$.
More precisely we shall work in the algebra of weight-dependent variables 
$\mathbb C_w[x,x^{-1},y]$ defined in Definition~\ref{def:Cwxy2}.
We write $F_n(x,y\,|\,w)$ for the respective noncommutative
weight-dependent Fibonacci polynomials in this case.

Any expression $X$ in $\mathbb C_w[x,x^{-1},y]$ can be \emph{normalized}
and written uniquely as a formal sum (with finitely many non-vanishing terms)
$$
X=\sum_{k=-\infty}^ \infty\sum_{\ell=0}^\infty c_X(k,\ell)x^ky^{\ell},
$$
with $c_X(k,\ell)$  a polynomial expression over $\mathbb C$
in the $w(s,t)^{\pm1}$, $s\in\mathbb Z, t\in\mathbb N$.
We say that the variables occurring in an expression in
$\mathbb C_w[x,x^{-1},y]$ have been \emph{normally ordered}
if, as above, in each of the monomials all the occurrences of
$y$ have been moved (with the help of commutation relations,
if necessary) to the most right, followed by the occurrences of
$x$ or $x^{-1}$ to the left (again with the help of commutation relations,
if necessary) and if only to the most left polynomials
of the respective monomials the various weights
$w(s,t)^{\pm1}$ appear.

From the first recurrence relation in \eqref{eq:Frec} and the
recurrence for the weighted binomial coefficients in \eqref{wbineq}
(where the ``big weight" $W(s,k)$ that appears there is a product of
small weights $w(s,t)$, according to \eqref{eq:W}) we can easily prove
the following result for the normally ordered noncommutative
weight-dependent Fibonacci polynomials.

\begin{proposition}\label{prop:binF}
As elements of $\mathbb C_w[x,x^{-1},y]$,
the \emph{noncommutative weight-dependent Fibonacci polynomials}
$F_n(x,y\,|\,w)$ (which are recursively defined by the two intial
values $F_0(x,y\,|\,w)=1$,
$F_1(x,y\,|\,w)=y$, and either one of the two recurrence relations in
\eqref{eq:Frec}) take the following normalized form:
\begin{equation}\label{def:wfib}
F_n (x,y\,|\,w) = \sum_{k=0}^n
{}_{\stackrel{\phantom w}{\stackrel{\phantom w}w}}\!\!
\begin{bmatrix}n-k\\k\end{bmatrix}
 x^k y^{n-2k},
\end{equation}
where ${}_{\stackrel{\phantom w}{\stackrel{\phantom w}w}}\!\!
\begin{smallmatrix}\begin{bmatrix}n\\k\end{bmatrix}\end{smallmatrix}$
is the weight-dependent binomial coefficient recursively
defined in \eqref{wbineq}, for any non-negative integer $n$.
\end{proposition}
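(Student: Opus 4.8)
The plan is to establish \eqref{def:wfib} by a two-step induction on $n$, driven by the recurrence \eqref{eq:Frec-a} (the two recurrences in \eqref{eq:Frec} define the same abstract polynomial, hence the same element of $\mathbb C_w[x,x^{-1},y]$, so either may be used). First I would dispose of the base cases $n=0$ and $n=1$, where the right-hand side of \eqref{def:wfib} collapses to $1$ and to $y$ respectively. Here it is worth recording once and for all that whenever $k>n/2$ the coefficient ${}_w\!\begin{bmatrix}n-k\\k\end{bmatrix}$ vanishes (since then $k>n-k$, cf.\ \eqref{recu}), so the ostensibly negative powers $y^{n-2k}$ never actually occur; thus \eqref{def:wfib} is a bona fide element of the algebra in which $y$ appears only to nonnegative powers.

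For the inductive step, assume \eqref{def:wfib} holds for $n$ and $n+1$. By \eqref{eq:Frec-a}, $F_{n+2}(x,y\,|\,w)=F_n(x,y\,|\,w)\,x+F_{n+1}(x,y\,|\,w)\,y$. The second summand is already normally ordered once the trailing $y$ is absorbed, giving $\sum_k {}_w\!\begin{bmatrix}n+1-k\\k\end{bmatrix}x^k y^{n+2-2k}$. The first summand is the only nonroutine piece: each monomial $x^k y^{n-2k}x$ must be renormalized. Applying Lemma~\ref{lem:com} with a single $x$ yields $y^{n-2k}x=W(1,n-2k)\,x\,y^{n-2k}$, and pushing this big weight leftward through $x^k$ by iterating the relation $x\,w(s,t)=w(s+1,t)\,x$ (equivalently $x^k\,W(1,m)=W(k+1,m)\,x^k$) converts it into $W(k+1,n-2k)$. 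Hence $x^k y^{n-2k}x=W(k+1,n-2k)\,x^{k+1}y^{n-2k}$, and since the weights commute among themselves I get $F_n(x,y\,|\,w)\,x=\sum_k {}_w\!\begin{bmatrix}n-k\\k\end{bmatrix}W(k+1,n-2k)\,x^{k+1}y^{n-2k}$.

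Collecting the coefficient of $x^k y^{n+2-2k}$ in $F_{n+2}(x,y\,|\,w)$ — the shift $k\mapsto k-1$ in the first summand being forced by matching the $y$-exponent — produces ${}_w\!\begin{bmatrix}n+1-k\\k\end{bmatrix}+{}_w\!\begin{bmatrix}n+1-k\\k-1\end{bmatrix}W(k,n+2-2k)$. It then remains to recognize this as ${}_w\!\begin{bmatrix}n+2-k\\k\end{bmatrix}$, which is precisely the recurrence \eqref{recw} with $(N,K)=(n+1-k,k)$, since there $N+1-K=n+2-2k$. This closes the induction. The computation is short, and the single place where a slip would be fatal is the weight bookkeeping in normalizing $F_n(x,y\,|\,w)\,x$: one must check that the big weight produced by the trailing $x$ is exactly $W(k+1,n-2k)$ and that, after the index shift, its argument aligns with the weight $W(k,n+2-2k)$ demanded by \eqref{recw}. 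Everything else is immediate.
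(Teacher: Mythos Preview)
Your proof is correct and follows essentially the same route as the paper's: induction on $n$ using the recurrence \eqref{eq:Frec-a}, normalizing the trailing $x$ via Lemma~\ref{lem:com} together with the shift relation $x\,W(s,t)=W(s+1,t)\,x$, and then closing with the Pascal-type recursion \eqref{recw}. The only cosmetic difference is that the paper performs the index shift $k\mapsto k-1$ \emph{before} normalizing (so the big weight emerges directly as $W(k,n+2-2k)$), whereas you shift afterwards; the content is identical.
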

\begin{proof}
We proceed by induction. For $n=0$ and $n=1$ \eqref{def:ellfib} is clear.
Now assume that the formula is true for all non-negative integers up to $n+1$.
To show it for the next value, $n+2$, apply the first identity in \eqref{eq:Frec}
to split $F_{n+2}(x,y\,|\,w)$ in two lower-indexed 
noncommutative weight-dependent Fibonacci polynomials and
apply the induction hypothesis. Concretely, we have
\begin{align*}
&F_{n+2}(x,y\,|\,w)\\
&=  F_{n+1}(x,y\,|\,w)\, y + F_n(x,y\,|\,w)\, x\\
&=  \sum_{k=0}^{n+1}
{}_{\stackrel{\phantom w}{\stackrel{\phantom w}w}}\!\!
\begin{bmatrix}n+1-k\\k\end{bmatrix} x^k y^{n+1-2k}\cdot y
+  \sum_{k=1}^{n+1}
{}_{\stackrel{\phantom w}{\stackrel{\phantom w}w}}\!\!
\begin{bmatrix}n-(k-1)\\k-1\end{bmatrix}x^{k-1}y^{n+2-2k}\cdot x\\
&=  \sum_{k=0}^{n+1}
{}_{\stackrel{\phantom w}{\stackrel{\phantom w}w}}\!\!
\begin{bmatrix}n+1-k\\k\end{bmatrix} x^k y^{n+1-2k}\cdot y
\\&\qquad
+  \sum_{k=1}^{n+1}
{}_{\stackrel{\phantom w}{\stackrel{\phantom w}w}}\!\!
\begin{bmatrix}n-(k-1)\\k-1\end{bmatrix}x^{k-1}\,
W(1, n+2-2k) \, x\, y^{n+2-2k}\\
&= \sum_{k=0}^{n+2}\left(
{}_{\stackrel{\phantom w}{\stackrel{\phantom w}w}}\!\!
\begin{bmatrix}n+1-k\\k\end{bmatrix}+
{}_{\stackrel{\phantom w}{\stackrel{\phantom w}w}}\!\!
\begin{bmatrix}n+1-k\\k-1\end{bmatrix}\,  W(k, n+2-2k)  \right)x^k y^{n+2-2k}\\
&= \sum_{k=0}^{n+2}
{}_{\stackrel{\phantom w}{\stackrel{\phantom w}w}}\!\!
\begin{bmatrix}n+2-k\\k\end{bmatrix}x^k y^{n+2-2k},
\end{align*}
where we have applied an instance of Lemma~\ref{lem:com} in the third equality
and the recursion for the weight-dependent binomial coefficients
\eqref{wbineq} in the last equality.
\end{proof}

A great deal of the analysis from the beginning of this section
which concerned the noncommutative Fibonacci polynomials
extends to the noncommutative weight-dependent case without much changes. 
This in particular concerns the formula \eqref{eq:sumF}
which now obviously takes the form
\begin{equation}\label{eq:sumwF}
F_{m+n}(x,y\,|\,w)= F_{m-1}(x,y\,|\,w) \, x \, F_{n-1}(x,y\,|\,w) 
+ F_m (x,y\,|\,w)\, F_n (x,y\,|\,w) .
\end{equation}
Again, this formula holds for all integers $m$ and $n$ but
we still have to specify the exact form of the negatively indexed
noncommutative weight-dependent Fibonacci polynomials.
By carrying out the same analysis that led to \eqref{eq:Fneg}
(i.e., application of the recurrence \eqref{eq:Frec} in the negative
direction, and induction) adapted to the weight-dependent setting, we obtain
\begin{equation}\label{eq:wFneg}
F_{-n}(x,y\,|\,w)=(-1)^{n}F_{n-2}\big(x^{-1},x^{-1}y\,|\,\widetilde{w}\big)\,x^{-1},
\end{equation}
where the dual weight function $\widetilde{w}$ is defined in \eqref{eq:tw}.

The noncommutative weight-dependent Euler--Cassini identity
thus takes the following form:
\begin{align}\label{eq:nwEulerCassini}
(-1)^n F_k (x,y\,|\,w)&= F_{n-2}\big(x^{-1},x^{-1}y\,|\,\widetilde{w}\big)\,
x^{-1}\, F_{n+k} (x,y\,|\,w)\notag\\
&\quad\,- F_{n-1}\big(x^{-1},x^{-1}y\,|\,\widetilde{w}\big)
\, F_{n+k-1}(x,y\,|\,w) ,
\end{align}
which is valid for all integers $n$ and $k$.

\subsection{Noncommutative elliptic Fibonacci polynomials}
We now specialize the weights $w(s,t):=w_{a,b;q,p}(s,t)$
(for $s\in\mathbb Z$ and $t\in\mathbb N$) to be the elliptic weights
defined in \eqref{eqn:ellipticwt},
where $a,b$ are two independent parameters and
$p,q$ are  complex numbers with $|p|<1$. We are thus working
in the algebra of elliptic-commuting variables
$\mathbb C_{a,b;q,p}[x,x^{-1},y]$ defined in Definition~\ref{def:Cellxy2}.
We write $F_n(x,y\,|\,a,b;q,p)$ for the respective noncommutative
elliptic Fibonacci polynomials in this case.

Specialization of Proposition~\ref{prop:binF} readily gives the following result.
\begin{corollary}\label{cor:binellF}
As elements of $\mathbb C_{a,b;q,p}[x,x^{-1},y]$,
the \emph{noncommutative elliptic Fibonacci polynomials}
$F_n(x,y\,|\,a,b;q,p)$ take the following normalized form:
\begin{equation}\label{def:ellfib}
F_n (x,y\,|\,a,b;q,p) = \sum_{k=0}^n \begin{bmatrix}n-k\\k\end{bmatrix}_{a,b;q,p} x^k y^{n-2k},
\end{equation}
where $\begin{smallmatrix}\begin{bmatrix}n\\
k\end{bmatrix}\end{smallmatrix}_{a,b;q,p}$
is the elliptic binomial coefficient given in \eqref{ellbin},
for any non-negative integer $n$.
\end{corollary}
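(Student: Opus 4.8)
The statement, Corollary~\ref{cor:binellF}, is a specialization of
Proposition~\ref{prop:binF}, so the plan is simply to instantiate the
general weight-dependent result at the elliptic weights and verify that the
abstract weight-dependent binomial coefficients collapse to the explicit
elliptic binomial coefficients. First I would invoke
Proposition~\ref{prop:binF} directly: setting
$w(s,t):=w_{a,b;q,p}(s,t)$ as in \eqref{eqn:ellipticwt} places us in the
algebra $\mathbb C_{a,b;q,p}[x,x^{-1},y]$, and the proposition immediately
yields
$$
F_n(x,y\,|\,a,b;q,p)=\sum_{k=0}^n
{}_{\stackrel{\phantom w}{\stackrel{\phantom w}w}}\!\!
\begin{bmatrix}n-k\\k\end{bmatrix} x^k y^{n-2k},
$$
with the weight-dependent binomial coefficients evaluated at the elliptic
weights. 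The only thing that then remains is to identify these specialized
coefficients with the $\begin{smallmatrix}\begin{bmatrix}n\\k\end{bmatrix}
\end{smallmatrix}_{a,b;q,p}$ of \eqref{ellbin}.

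That identification is already available in the excerpt: the discussion
surrounding \eqref{ellbin} states that the elliptic binomial coefficients,
together with the big elliptic weights \eqref{eqn:ellipticbigwt}, satisfy
the recursion \eqref{wbineq} as a consequence of the addition formula
\eqref{eq:addf}. Hence I would argue that the specialized weight-dependent
binomial coefficient
${}_{\stackrel{\phantom w}{\stackrel{\phantom w}w}}\!\!
\begin{smallmatrix}\begin{bmatrix}n\\k\end{bmatrix}\end{smallmatrix}$
and the elliptic binomial coefficient
$\begin{smallmatrix}\begin{bmatrix}n\\k\end{bmatrix}\end{smallmatrix}_{a,b;q,p}$
satisfy the same initial conditions \eqref{recu} and the same recurrence
\eqref{recw}, with $W(k,n+1-k)=W_{a,b;q,p}(k,n+1-k)$; since this
data determines the coefficients uniquely, the two agree for all $n,k$.
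Substituting this equality into the display above gives precisely
\eqref{def:ellfib}, completing the proof.

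The main obstacle, and really the only content beyond bookkeeping, is
ensuring that the elliptic weights fed into Proposition~\ref{prop:binF}
are exactly the ones for which \eqref{ellbin} is known to solve the
recursion. This is a matter of matching the weight conventions: the small
weights $w_{a,b;q,p}(s,t)$ in \eqref{eqn:ellipticwt} must be the same
$w(s,t)$ entering \eqref{eqn:xy} of Definition~\ref{def:Cellxy2}, which the
paper has already noted holds since \eqref{eqn:xy} reads $yx=w(1,1)xy$ with
$w(s,t)=w_{a,b;q,p}(s,t)$. Once this compatibility is in place, the
argument is purely a uniqueness-of-solution-to-a-recurrence step and no
fresh theta-function manipulation is required, the addition-formula work
having been absorbed into the cited fact that \eqref{ellbin} satisfies
\eqref{wbineq}.
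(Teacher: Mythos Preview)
Your proposal is correct and matches the paper's approach exactly: the paper simply states that the corollary follows by specializing Proposition~\ref{prop:binF} to the elliptic weights, and you have spelled out in detail what that specialization entails, including the (already established in Section~\ref{sec:pre}) identification of the specialized weight-dependent binomial coefficients with the elliptic binomial coefficients of \eqref{ellbin}.
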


Now, the specialization of \eqref{eq:sumwF} is straightforward and gives
\begin{align}\label{eq:sumellF}
F_{m+n}(x,y\,|\,a,b;q,p)&= F_{m-1}(x,y\,|\,a,b;q,p) \, x
\, F_{n-1}(x,y\,|\,a,b;q,p) \notag\\
&\quad\;+ F_m (x,y\,|\,a,b;q,p)\, F_n (x,y\,|\,a,b;q,p),
\end{align}
which again holds for all integers $m$ and $n$.

Finally, we determine the exact form of the negatively indexed 
noncommutative elliptic Fibonacci polynomials.
Combination of \eqref{eq:tw} and \eqref{winv} gives
the following formula for the \emph{dual weights}
\begin{equation}
\widetilde{w}_{a,b;q,p}(s,t)=w_{a/b,1/b;q,p}(s,t),
\end{equation}
which shows that the negatively indexed 
noncommutative elliptic Fibonacci polynomials can be
conveniently written in terms of the non-negatively indexed ones.
We thus have 
\begin{equation}\label{eq:ellFneg}
F_{-n}(x,y\,|\,a,b;q,p)=(-1)^{n}
F_{n-2}\big(x^{-1},x^{-1}y\,|\,a/b,1/b;q,p\big)\,x^{-1}.
\end{equation}
The noncommutative elliptic Euler--Cassini identity thus takes the following form:
\begin{align}\label{eq:nellCassini}
&(-1)^n F_k (x,y\,|\,a,b;q,p)\notag\\
&=F_{n-2}\big(x^{-1},x^{-1}y\,|\,a/b,1/b;q,p\big)\,
x^{-1}\, F_{n+k} (x,y\,|\,a,b;q,p)\notag\\
&\quad\;- F_{n-1}\big(x^{-1},x^{-1}y\,|\,a/b,1/b;q,p\big) \,
F_{n+k-1}(x,y\,|\,a,b;q,p) ,
\end{align}
which is valid for all integers $n$ and $k$.

\section*{Acknowledgements}
The authors would like to thank Be\'ata B\'enyi for fruitful discussions.


\end{document}